\newcommand{\inv}{^{\raisebox{.2ex}{$\scriptscriptstyle-1$}}} 
\newcommand{\id}{\mathcal{I}\!\textit{dl}(S)}
\newcommand{\ids}{\mathcal{I}\!\textit{dl}_{\scriptscriptstyle{\mathrm{sub}}}(S)}
\newcommand{\idss}{\mathcal{I}\!\textit{dl}_{\scriptscriptstyle{\mathrm{sub}}}(S')}
\newcommand{\cs}{\mathcal{C}_{\scriptscriptstyle{\mathrm{sub}}}}
\definecolor{brightmaroon}{rgb}{0.76, 0.13, 0.28}
\newtheorem{theorem}{Theorem}[section]
\newtheorem{proposition}[theorem]{Proposition}
\newtheorem{lemma}[theorem]{Lemma}
\newtheorem{corollary}[theorem]{Corollary}
\theoremstyle{definition}
\numberwithin{equation}{section}
\begin{document}

\author{Amartya Goswami}

\address{
[1] Department of Mathematics and Applied Mathematics, University of Johannesburg, P.O. Box 524, Auckland Park 2006, South Africa. 
[2] National Institute for Theoretical and Computational Sciences (NITheCS), South Africa.}

\email{agoswami@uj.ac.za}

\title{Subtractive spaces of semirings}

\date{}


\subjclass{16Y60}


\keywords{semiring, subtractive ideal, generic point}

\begin{abstract}
Using the closure operator that defines a subtractive ideal of a semiring $S$, in this note we introduce a topology on the set of all ideals of $S$ induced by that operator. We show that the corresponding subtractive space is $T_0$ and every nonempty irreducible closed set has a unique generic point, whereas the restricted subspace of subtractive ideals is $T_1$. Using a semiring homomorphism, we obtain a continuous map between the corresponding subtractive spaces.
\end{abstract}
\maketitle

\section{Introduction and Preliminaries}

Since the introduction of semirings in \cite{V34}, it is natural to compare and extend results from rings to semirings. One may think that semirings can always be extended to rings, but  \cite{V39} gives examples of semirings that can not be embedded in rings. Furthermore, the lack of `subtraction' in a semiring makes the behaviour of ideals substantially different from that of rings. To minimize this gap, the notion of a `$k$-ideal' (also called a subtractive ideal in \cite{G99}) has been introduced in \cite{H58}. There is a natural closure operator that defines a subtractive ideal. The aim of this note is to introduce a topology on the set of all ideals of a semiring induced by this closure operator. We study a few topological properties of these spaces.
  
A (commutative) \emph{semiring} is a system $(S ,+,0,\cdot, 1)$ such that $(S ,+,0)$ is a commutative monoid, $(S , \cdot,1)$ is a commutative monoid, $0\cdot x=0=x\cdot 0$ for all $x\in S ,$ and $\cdot$ distributes over $+$. We shall write $x y$ for $x\cdot y.$
A \emph{semiring homomorphism} $\phi\colon S \to S'$ is a map such that $\phi(x+y)=\phi(x)+\phi(y),$ $\phi(xy)=\phi(x)\phi(y),$ and $\phi(1)=1$ for all $x,$ $y\in S.$ 
An \emph{ideal} $I $ of a semiring $S$ is an additive submonoid of $S$ such that 
$rx\in I $
for all $x\in I $ and $r\in S .$ An ideal $I$ is called \emph{proper} if $I\neq S.$ We also use the symbol $0$ to denote the zero ideal of $S$.

Recall from \cite{G99} that a \emph{subtractive ideal}  $I$ of $S$ is an ideal of $R$ such that $x,$ $x+y\in I$ imply $y\in I$. Equivalently, an
ideal $I$ of $S$ is called a $k$-ideal if $x + y \in I$ implies either $x,$ $y \in I$ or $x,$ $y \notin
I.$ Surely, the zero ideal is subtractive and is contained in every $k$-ideal of $S$. We denote the set of all ideals and all subtractive ideals of $S$ by $\id$ and $\ids$ respectively. 
The notion of a subtractive ideal can also be characterized in terms of a closure operator endowed on $\id$. Suppose $I\in \id$. A \emph{subtractive closure} (also known as $k$-closure, see \cite[Lemma 2.2]{SA92}) operator $\cs$ is defined by
\begin{equation}
\label{clkdef}
\cs(I)=\{r\in S\mid r+x\in I\;\text{for some}\; x\in I\}.
\end{equation} 

\begin{lemma}\label{lclk}
Let $I$, $\{I_{\lambda}\}_{\lambda \in \Lambda}$, and $J$ be ideals of a semiring $S$. Then  $\cs$ has the following properties.
\begin{enumerate}[\upshape (1)]
	
\item\label{iclk} $I\subseteq \cs(I).$

\item \label{zgz}
$\cs(0)=0.$

\item\label{rclk} $\cs(S)=S.$

\item\label{clcl} $\cs(\cs(I))=\cs(I).$

\item\label{ijcl} $I\subseteq J$ implies $\cs(I)\subseteq \cs(J).$

\item \label{clu}
$\cs(I\cup J)\supseteq \cs(I) \cup \cs(J).$

\item\label{arbin} $\cs\left( \bigcap_{\lambda\in \Lambda}I_{\lambda}\right)=\bigcap_{\lambda\in \Lambda} \cs (I_{\lambda}).$

\item\label{clksm} $\cs(I)$ is the smallest subtractive ideal containing $I$.

\item\label{altd} $I$  is a subtractive if and only if $I=\cs(I).$
\end{enumerate}
\end{lemma}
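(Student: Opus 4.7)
The plan is to dispatch the routine parts directly from the definition of $\cs$, then establish (8) as the central structural statement, and derive (4), (9), and one inclusion in (7) from it, leaving one residual inclusion in (7) to be proved by a direct witness-combination argument.

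Property (1) follows by taking the witness $x = 0 \in I$, so that $r + x = r \in I$ for any $r \in I$. Property (2) is then immediate since $r \in \cs(0)$ forces $r + 0 = 0$, hence $r = 0$; and (3) follows since $\cs(S) \subseteq S$ always holds and the reverse inclusion comes from (1). Monotonicity (5) is direct: if $r + x \in I \subseteq J$ with $x \in I \subseteq J$, then $r \in \cs(J)$; and (6) then follows by applying (5) to the inclusions $I, J \subseteq I \cup J$.

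The central step is (8), which I would prove in two halves. First, $\cs(I)$ is itself a subtractive ideal: additive closure uses the identity $(r_1 + r_2) + (x_1 + x_2) = (r_1 + x_1) + (r_2 + x_2) \in I$; absorption by $S$ uses $sr + sx = s(r+x) \in I$; and subtractivity follows from the identity $r' + (r + x_1 + x_2) = (r + r' + x_2) + x_1 \in I$, valid whenever $r + x_1 \in I$ and $(r + r') + x_2 \in I$, which exhibits $r + x_1 + x_2 \in I$ as a witness for $r' \in \cs(I)$. Second, for any subtractive ideal $J \supseteq I$ and any $r \in \cs(I)$ with witness $x$, both $x$ and $r + x$ lie in $J$, so subtractivity of $J$ forces $r \in J$; this yields the required minimality.

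The remaining items follow cleanly. Property (9) is a direct rephrasing of (8): if $I$ is subtractive, then $I$ is itself the smallest subtractive ideal containing $I$, so $\cs(I) = I$; conversely, if $I = \cs(I)$ then $I$ is subtractive since $\cs(I)$ always is. Property (4) is then immediate, since $\cs(I)$ is subtractive and hence a fixed point of $\cs$. For (7), the inclusion $\cs(\bigcap_\lambda I_\lambda) \subseteq \bigcap_\lambda \cs(I_\lambda)$ follows from (5) applied to $\bigcap_\lambda I_\lambda \subseteq I_\mu$ for every $\mu$; equivalently, $\bigcap_\lambda \cs(I_\lambda)$ is an intersection of subtractive ideals containing $\bigcap_\lambda I_\lambda$, so (8) gives the same inclusion. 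The reverse inclusion is the delicate part: given $r$ with witnesses $x_\lambda \in I_\lambda$ and $r + x_\lambda \in I_\lambda$ for every $\lambda$, one must exhibit a single $y \in \bigcap_\lambda I_\lambda$ with $r + y \in \bigcap_\lambda I_\lambda$. I expect this to be the main obstacle, since it is not a formal consequence of the closure-operator properties already established and requires manufacturing a common witness from the family $\{x_\lambda\}_{\lambda \in \Lambda}$ rather than invoking monotonicity or minimality.
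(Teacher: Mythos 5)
Your treatments of (1)--(6), (8) and (9) are correct and complete, and in fact go well beyond the paper, which simply declares (1)--(7) straightforward and cites \cite{JRT22} and \cite{SA92} for (8) and (9). In particular your verification that $\cs(I)$ is itself subtractive, via the identity $r'+(r+x_1+x_2)=(r+r'+x_2)+x_1$, is exactly the computation needed, and deriving (4) and (9) from (8) is clean. The one genuine gap is the one you flag yourself: the inclusion $\bigcap_{\lambda}\cs(I_\lambda)\subseteq\cs\bigl(\bigcap_{\lambda}I_\lambda\bigr)$ in (7) is left unproved. Your instinct that no common witness can be manufactured from the family $\{x_\lambda\}$ is not just a difficulty of presentation: the inclusion is false in general, so no argument can close this gap as stated. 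Take $S=\mathds{N}[u,v,w]$, $I=\langle v,\,u+v\rangle$ and $J=\langle w,\,u+w\rangle$. Then $u\in\cs(I)\cap\cs(J)$ (witnesses $v$ and $w$). An element lies in $I$ exactly when it equals $\alpha v+\beta u$ for some $\alpha\ge\beta$ coefficientwise, and in $J$ exactly when it equals $\gamma w+\delta u$ with $\gamma\ge\delta$. If $z$ and $u+z$ both lay in $I\cap J$, then writing $u+z=\alpha v+\beta u$ and comparing coefficients of the monomial $u$ gives $\beta_0\ge 1$, hence $\alpha_0\ge 1$, hence the monomial $v$ occurs in $z$ with positive coefficient; but $z\in J$ forces that coefficient to be $0$. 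So $u\notin\cs(I\cap J)$.

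What survives of (7) is only the inclusion $\cs\bigl(\bigcap_\lambda I_\lambda\bigr)\subseteq\bigcap_\lambda\cs(I_\lambda)$, which you correctly obtain from monotonicity (or from (8), since $\bigcap_\lambda\cs(I_\lambda)$ is an intersection of subtractive ideals containing $\bigcap_\lambda I_\lambda$, and an intersection of subtractive ideals is subtractive). Equality does hold in the special case where every $I_\lambda$ is already subtractive, for then both sides reduce to $\bigcap_\lambda I_\lambda$. Since the paper later invokes the full equality of (7) in the proof of the generic-point theorem, you should treat (7) as requiring either the added hypothesis that the $I_\lambda$ are subtractive or a weakening to the one valid inclusion; your proof proposal is otherwise sound, and the honest identification of the obstruction is the most valuable part of it.
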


\begin{proof}
The proofs of (\ref{iclk})--(\ref{arbin}) are straightforward. For  (\ref{clksm}), see \cite[Proposition 3.1]{JRT22}, whereas for (\ref{altd}), we refer to \cite[Lemma 2.2]{SA92}.
\end{proof}

It is obvious from from Lemma \ref{lclk}(\ref{ijcl}) that $\cs(I)\subseteq \cs(\sqrt{I})$ for all $I\in \id.$ Note that we may use Lemma \ref{lclk}(\ref{altd}) as an alternative definition of a subtractive ideal of a semiring. From (\ref{clksm}),  it follows that a $\cs$ is indeed a map
\[\cs\colon \id\to \ids\]
defined by (\ref{clkdef}). Considering the inclusion map $\iota\colon \ids\to \id$, it is easy to see the following.

\begin{proposition}
The pair $(\cs, \iota)$ forms a Galois connection.
\end{proposition}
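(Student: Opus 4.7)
The plan is to verify the defining adjunction inequality of a (monotone) Galois connection between the posets $(\id,\subseteq)$ and $(\ids,\subseteq)$. Since $\cs$ is order-preserving (Lemma \ref{lclk}(\ref{ijcl})) and $\iota$ is trivially order-preserving, I would aim to establish that for every $I \in \id$ and every $J \in \ids$,
\[
\cs(I)\subseteq J \;\Longleftrightarrow\; I\subseteq \iota(J),
\]
which is the standard formulation of $\cs \dashv \iota$.

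For the forward implication, assume $\cs(I)\subseteq J$. Then by Lemma \ref{lclk}(\ref{iclk}) we have $I\subseteq \cs(I)$, so composing the two inclusions yields $I\subseteq J = \iota(J)$. This direction uses only extensivity of the closure operator.

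For the reverse implication, assume $I\subseteq \iota(J)=J$. Applying monotonicity from Lemma \ref{lclk}(\ref{ijcl}) gives $\cs(I)\subseteq \cs(J)$. Since $J$ is subtractive, Lemma \ref{lclk}(\ref{altd}) tells us that $\cs(J)=J$, so we conclude $\cs(I)\subseteq J$. Thus both directions hold, establishing the Galois connection.

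I don't anticipate any real obstacle: the proposition is essentially the formal observation that a closure operator together with the inclusion of its fixed-point subposet always forms an adjoint pair, and all the ingredients (extensivity, monotonicity, and the characterisation of subtractive ideals as fixed points of $\cs$) have already been recorded in Lemma \ref{lclk}. The only thing to be careful about is to state the correct variance — namely that this is a covariant (monotone) Galois connection with $\cs$ as the left adjoint — rather than the antitone version sometimes also called a Galois connection in the literature.
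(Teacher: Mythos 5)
Your proof is correct: the paper offers no written proof (it simply remarks that the statement is ``easy to see''), and your argument is the standard verification of the adjunction $\cs(I)\subseteq J \Leftrightarrow I\subseteq \iota(J)$ using exactly the ingredients recorded in Lemma \ref{lclk} (extensivity, monotonicity, and the fixed-point characterisation of subtractive ideals). This is precisely the argument the paper intends, so there is nothing to add beyond your correct note on variance.
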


\begin{lemma}\label{psi}
If $I$ and $J$ are two subtractive ideals of a semiring $S$, then their product $IJ$ is also a subtractive ideal of $R$, and $IJ\subseteq I\cap J.$
\end{lemma}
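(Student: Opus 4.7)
The plan is to prove the two parts of the lemma separately. The inclusion $IJ \subseteq I \cap J$ is immediate from the definitions: a typical element of $IJ$ is a finite sum $\sum_{k} a_k b_k$ with $a_k \in I$ and $b_k \in J$, and the absorbing property of the ideals $I$ and $J$ places each $a_k b_k$, and hence the sum, in both $I$ and $J$. This step does not use subtractivity at all and can be settled in one line.

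For the subtractivity of $IJ$, I would invoke Lemma~\ref{lclk}(\ref{altd}) and aim to verify that $\cs(IJ) = IJ$. The inclusion $IJ \subseteq \cs(IJ)$ is given by Lemma~\ref{lclk}(\ref{iclk}). For the reverse inclusion, take $r \in \cs(IJ)$; by~\eqref{clkdef} there exists some $w \in IJ$ with $r + w \in IJ$. Using the containment above, both $w$ and $r + w$ lie in the subtractive ideal $I$, so $r \in I$. The symmetric argument with $J$ in place of $I$ yields $r \in J$, hence $r \in I \cap J$.

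The main obstacle---and the step I expect to require the most care---is promoting the membership $r \in I \cap J$ to $r \in IJ$, since $I \cap J$ can in general be strictly larger than $IJ$. My plan here is to fix explicit bilinear expansions $w = \sum_i a_i b_i$ and $r + w = \sum_j c_j d_j$, and then to build a finite bilinear expression for $r$ by combining summands from the two representations, invoking the subtractivity of $I$ (holding each $b_i$ fixed) and of $J$ (holding each $c_j$ fixed). The hypothesis that \emph{both} $I$ and $J$---rather than merely $I \cap J$---are subtractive should enter the argument precisely at this reconstruction step, which is the delicate heart of the proof.
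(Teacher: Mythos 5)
Your handling of the inclusion $IJ\subseteq I\cap J$ and of the first reduction is correct, and in one respect more careful than the paper's own argument: you work with a general element $\sum_k a_kb_k$ of $IJ$, whereas the paper's proof writes every element of $IJ$ as a single product $ij$, which is not legitimate under the usual definition of the product of ideals (the set of such single products need not be closed under addition). Your deduction that any $r\in\cs(IJ)$ lies in $I\cap J$ --- apply subtractivity of $I$ to $w,\,r+w\in I$, and likewise for $J$ --- is exactly the content of the paper's argument for the subtractivity claim.

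However, the proof is not complete, and the step you yourself single out as ``the delicate heart'' is a genuine gap rather than a technicality: nothing in your sketch actually produces a bilinear expansion of $r$ from the expansions of $w$ and $r+w$. Subtractivity of $I$ is a membership statement, not a device for extracting factorizations, so ``holding each $b_i$ fixed'' has no evident meaning; in a ring the step would be the trivial $r=(r+w)-w\in IJ$, which is precisely the subtraction a semiring lacks. It is worth noting that the paper's proof commits the same leap silently: it concludes $y\in I$ and $y\in J$ and then writes ``Hence, $y\in IJ$,'' passing from $I\cap J$ to $IJ$ with no justification, even though $IJ\subsetneq I\cap J$ is entirely possible. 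So you have correctly located the crux, but neither you nor the paper crosses it. What both arguments actually establish is $\cs(IJ)\subseteq I\cap J$; to conclude that $IJ$ itself is subtractive one needs $\cs(IJ)\subseteq IJ$, which is the very assertion being proved. Absent a further idea (or an extra hypothesis, or a restatement of the lemma for $\cs(IJ)$ or for $I\cap J$ in place of $IJ$), the claim remains unproved.
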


\begin{proof}
Suppose $x,$ $x+y\in IJ$. Then $x=ij$ and $x+y=i'j'$ for some $i,$ $i'\in I$ and $j,$ $j'\in J.$ Since $I$ is an ideal, $ij,$ $i'j'\in I$, that is, $x,$ $x+y\in I$. Since $I$ is also a subtractive ideal, this implies $y\in I.$ Similarly, we can show that $y\in J.$ Hence, $y\in IJ,$ and this proves that $IJ$ is a subtractive ideal. 

Let $r\in IJ$. Then there exists an $x\in IJ$ such that $r+x\in IJ.$ Since $I$ and $J$ are ideals of $S$, we definitely have $IJ\subseteq I\cap J$, which implies that $r+x,$ $x\in I \cap J$. Since $I$ and $J$ are also subtractive ideals, we must have $r\in I\cap J$, as required. 
\end{proof}

If $\{I _{\lambda}\}_{\lambda\in \Lambda}$ is a family of subtractive ideals, then their intersection $\bigcap_{\lambda\in \Lambda} I _{\lambda}$ is also a subtractive ideal. Note that the sum of two subtractive ideals of a semiring need not be a subtractive ideal. Recall from \cite[Example 6.19]{G99} that $2\mathds{N}$ and $3\mathds{N}$ are subtractive ideals of the semiring $\mathds{N}$, however $2\mathds{N}+3\mathds{N}=\mathds{N}\setminus \{1\}$ is not a subtractive ideal of $\mathds{N}$, however it is so in a lattice ordered semiring (\textit{cf}.
\cite[Corollary 21.22]{G99}). 
The lattice of all ideals of a ring is modular, whereas  the same is not true for a semiring. Nevertheless, we have the following result that announced in \cite{H58}. For a proof, see \cite[Proposition 6.38]{G99}.

\begin{proposition}
Let $S$ be a semiring. Then $\ids$ is a modular lattice.
\end{proposition}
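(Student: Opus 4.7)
The plan is to exploit the fact that the meet in $\ids$ is just intersection (intersections of subtractive ideals remain subtractive by Lemma~\ref{lclk}(\ref{arbin}) and (\ref{altd})), whereas the join must be $I \vee J = \cs(I + J)$, since, as the preceding remarks note, $I + J$ need not be subtractive. Modularity of $\ids$ therefore amounts to the identity
\[
\cs\bigl(I + (J \cap K)\bigr) = \cs(I + J) \cap K
\]
for all $I$, $J$, $K \in \ids$ with $I \subseteq K$.

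First I would establish the ring-style modular identity at the level of ordinary ideals of $S$, namely
\[
I + (J \cap K) = (I + J) \cap K,
\]
whenever $K$ is subtractive and $I \subseteq K$. The inclusion $\subseteq$ is automatic, since the left side lies in $I + J$ and in $K + K = K$. For $\supseteq$, take $a \in (I + J) \cap K$ and write $a = i + j$ with $i \in I$ and $j \in J$; then $i \in K$ because $I \subseteq K$, and from $i$, $a = i + j \in K$ the subtractivity of $K$ forces $j \in K$, hence $j \in J \cap K$ and $a \in I + (J \cap K)$. This is the heart of the matter: subtractivity of $K$ substitutes for the ambient group-subtraction that makes the classical modular law work for rings.

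Applying $\cs$ to both sides of the ideal identity and then using Lemma~\ref{lclk}(\ref{arbin}) to push $\cs$ through the intersection, together with $\cs(K) = K$ from Lemma~\ref{lclk}(\ref{altd}) (since $K \in \ids$), yields
\[
\cs\bigl(I + (J \cap K)\bigr) = \cs\bigl((I + J) \cap K\bigr) = \cs(I + J) \cap \cs(K) = \cs(I + J) \cap K,
\]
which is precisely modularity of $\ids$. The main obstacle is more conceptual than technical: one must recognise that the join in $\ids$ is the $\cs$-closure of the usual sum rather than the sum itself, and then notice that the commutation of $\cs$ with intersection is exactly what allows the standard ring-theoretic modular identity to be transported into this closure-theoretic setting.
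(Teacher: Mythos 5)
Your setup is right and partly valuable: the meet in $\ids$ is intersection, the join is $I\vee J=\cs(I+J)$ by Lemma~\ref{lclk}(\ref{clksm}), and your ideal-level identity $I+(J\cap K)=(I+J)\cap K$ for $I\subseteq K$ with $K$ subtractive is correct and nicely proved (subtractivity of $K$ really does stand in for the missing subtraction). Note also that the paper itself offers no argument here --- it only cites Golan, Proposition~6.38 --- so yours is a genuine attempt at a proof rather than a variant of the paper's.

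The gap is the final step, $\cs\bigl((I+J)\cap K\bigr)=\cs(I+J)\cap\cs(K)$, which you outsource to Lemma~\ref{lclk}(\ref{arbin}). The inclusion $\cs\bigl((I+J)\cap K\bigr)\subseteq\cs(I+J)\cap\cs(K)$ is the easy half of that lemma (monotonicity plus the fact that an intersection of subtractive ideals is subtractive), but it only yields $I\vee(J\wedge K)\subseteq(I\vee J)\wedge K$, the inclusion valid in \emph{every} lattice. The substantive half of modularity is the reverse inclusion $\cs(I+J)\cap K\subseteq\cs\bigl((I+J)\cap K\bigr)$, i.e.\ exactly the hard direction of Lemma~\ref{lclk}(\ref{arbin}) --- and that direction is in fact false for general ideals, so it cannot be taken off the shelf. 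For example, let $S=\mathds{N}\times\mathds{Z}$ with componentwise addition and $(n,x)(m,y)=(nm,\,ny+mx)$; then $I_{1}=\{0\}\times\mathds{Z}_{\geqslant 0}$ and $I_{2}=\{0\}\times\mathds{Z}_{\leqslant 0}$ are ideals with $\cs(I_{1})=\cs(I_{2})=\{0\}\times\mathds{Z}$, while $\cs(I_{1}\cap I_{2})=\cs(0)=0$, so $\cs(I_{1})\cap\cs(I_{2})\nsubseteq\cs(I_{1}\cap I_{2})$. After your (correct) ideal-level identity, the instance you need, $\cs(I+J)\cap K\subseteq\cs\bigl(I+(J\cap K)\bigr)$, is verbatim the nontrivial half of the modular law itself; so the argument as written is circular at its crux. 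To close it you must prove that instance directly by an element computation: starting from $a\in K$ with $a+i+j=i'+j'$ ($i,i'\in I$, $j,j'\in J$), produce elements of $I+(J\cap K)$ witnessing $a\in\cs\bigl(I+(J\cap K)\bigr)$. That computation is the actual content of Henriksen's and Golan's proof, and it is the one piece your proposal does not supply.
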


\section{Subtractive spaces} 

From Lemma \ref{lclk}(\ref{iclk})--(\ref{ijcl}), we observe that a closure operator $\cs$ satisfies the axioms of an algebraic closure operators. However, it is not true in general that $\cs$ is closed under finite unions, and hence, it is not a Kuratowski closure operator. Considering the subsets $\{\cs(I)\}_{I\in \id}$ of $S$ as subbasic closed sets, nevertheless, induce a topology on $\id$,  which we call a \emph{subtractive topology} and denote by $\tau_s$. For a semiring $S$, the set $\id$ endowed with a subtractive topology is called a \emph{subtractive space}, and instead of $(\id, \tau_s)$, we denote the space also by $\id$.

\begin{lemma}
The subbasic closed sets of a subtractive space $\id$ are the subtractive ideals of $S$.
\end{lemma}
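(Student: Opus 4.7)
The plan is to show that the collection $\{\cs(I) : I \in \id\}$, which by construction is the family of subbasic closed sets of $\tau_s$, coincides exactly with $\ids$. This amounts to a double inclusion, and both directions fall straight out of Lemma \ref{lclk}.

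For the inclusion $\{\cs(I) : I \in \id\} \subseteq \ids$, I would simply invoke Lemma \ref{lclk}(\ref{clksm}): for every ideal $I$ of $S$, the set $\cs(I)$ is the smallest subtractive ideal containing $I$, and in particular it is a subtractive ideal of $S$. So every subbasic closed set is a member of $\ids$.

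For the reverse inclusion $\ids \subseteq \{\cs(I) : I \in \id\}$, let $J \in \ids$. By Lemma \ref{lclk}(\ref{altd}), the fact that $J$ is subtractive is equivalent to $J = \cs(J)$. Since $J$ is itself an ideal of $S$, this exhibits $J$ as $\cs(I)$ for the choice $I = J$, hence $J$ belongs to the family of subbasic closed sets.

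There is no real obstacle here: the statement is essentially a restatement of parts (\ref{clksm}) and (\ref{altd}) of Lemma \ref{lclk} packaged in topological language. The only thing to be careful about is not to confuse ``subbasic closed'' with ``closed''; the lemma only identifies the generating family, not the full closed-set lattice of $\tau_s$, and indeed the paper has already observed that arbitrary finite unions of subtractive ideals need not remain subtractive, which is precisely why $\cs$ fails to be a Kuratowski closure operator and why the topology must be generated by a subbase rather than taken directly as the closed sets.
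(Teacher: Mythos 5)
Your proof is correct and takes the same route as the paper, which simply cites parts (8) and (9) of Lemma \ref{lclk}; you have merely spelled out the two inclusions explicitly. The closing caveat distinguishing subbasic closed sets from general closed sets is accurate and consistent with the paper's earlier remark that finite unions of subtractive ideals need not be subtractive.
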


\begin{proof}
The proof follows from  (\ref{clksm}) and (\ref{altd})) of Lemma \ref{lclk}.
\end{proof}

If $I,$ $I'\in \id$ and $I\neq I'$, then it is easy to see that $\cs(I) \neq \cs(I')$, and hence we have 

\begin{lemma}\label{t0}
Every subtractive space is $T_{{\scriptscriptstyle 0}}.$
\end{lemma}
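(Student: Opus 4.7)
The plan is to verify the $T_{0}$ separation axiom directly using the description of the subbasic closed sets as subtractive ideals. Given two distinct points $I\neq I'$ in $\id$, I would produce a subbasic closed set containing exactly one of them, which by the preceding lemma amounts to finding a subtractive ideal that separates them topologically.

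First, I would establish the claim highlighted in the sentence preceding the lemma: the assignment $I\mapsto \cs(I)$ is injective on $\id$, so $\cs(I)\neq \cs(I')$. Armed with this, I would then invoke Lemma~\ref{lclk}(\ref{iclk}), which says $I\subseteq \cs(I)$ and $I'\subseteq \cs(I')$, and run a brief case analysis: if $I\not\subseteq \cs(I')$, then the subbasic closed set $\cs(I')$ contains $I'$ but not $I$, and the symmetric case is handled analogously. The remaining possibility, that $I\subseteq \cs(I')$ and simultaneously $I'\subseteq \cs(I)$, would be ruled out by combining monotonicity and idempotence of $\cs$, namely Lemma~\ref{lclk}(\ref{ijcl}) and (\ref{clcl}): the two inclusions would yield $\cs(I)\subseteq \cs(\cs(I'))=\cs(I')$ and conversely, forcing $\cs(I)=\cs(I')$ and contradicting the injectivity just established.

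The main obstacle is justifying the injectivity of $\cs$ on $\id$ itself. I would attempt it by choosing a witness $x$ in, say, $I\setminus I'$ and tracing through the definition (\ref{clkdef}), aiming to exhibit $x\in \cs(I)\setminus \cs(I')$ directly from the fact that $x\in I$ already certifies $x\in \cs(I)$. If this direct route does not cut cleanly in full generality, the fallback is to work with the principal ideal $(x)$: since $(x)\subseteq I$ but $(x)\not\subseteq I'$, the subbasic closed set $\cs((x))$ is a candidate that may witness the separation of $I$ from $I'$ on its own, bypassing the need for full injectivity of $\cs$ and appealing instead only to the monotonicity of $\cs$ and the fact, from Lemma~\ref{lclk}(\ref{iclk}) and (\ref{altd}), that membership of $x$ in a subtractive ideal is detected by $\cs$.
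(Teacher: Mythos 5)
Your reduction is exactly the one the paper makes: its entire argument for Lemma~\ref{t0} is the one-line assertion that $I\neq I'$ implies $\cs(I)\neq\cs(I')$, and your case analysis via monotonicity and idempotence correctly shows that $T_0$ would follow from this injectivity (indeed, since by Proposition~\ref{scir} the closure $\overline{\{I\}}$ consists of the points lying in $\cs(I)$, the two statements are equivalent). So you have located the crux precisely. The problem is that the crux cannot be closed: $\cs$ is \emph{not} injective on $\id$. The paper itself supplies the counterexample in Section~1: in the semiring $\mathds{N}$, the ideal $I=2\mathds{N}+3\mathds{N}=\mathds{N}\setminus\{1\}$ is not subtractive, and since $1+2=3\in I$ we get $1\in\cs(I)$, hence $\cs(I)=\mathds{N}=\cs(\mathds{N})$ while $I\neq\mathds{N}$. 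Your proposed direct route fails on this pair ($1\in\mathds{N}\setminus I$ yet $1\in\cs(I)$), and so does the fallback via principal ideals: the only subtractive ideal of $\mathds{N}$ containing $\mathds{N}\setminus\{1\}$ is $\mathds{N}$ itself, so under the membership convention used in the proof of Proposition~\ref{scir} (the point $J$ lies in the subbasic closed set determined by $K$ exactly when $J\subseteq\cs(K)$) every subbasic closed set containing the point $\mathds{N}\setminus\{1\}$ also contains the point $\mathds{N}$, and conversely; the two points are topologically indistinguishable.

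The honest conclusion is that your attempt is incomplete at exactly the step you flagged, and for a good reason: the injectivity claim, which the paper dismisses as ``easy to see,'' is false in general, and with it the lemma as stated appears to fail already for $S=\mathds{N}$. If one restricts attention to the subspace $\ids$ of subtractive ideals, then $I=\cs(I)$ and $I'=\cs(I')$ by Lemma~\ref{lclk}(\ref{altd}), injectivity becomes trivial, and your separation argument goes through verbatim (in fact yielding $T_1$, as in the paper's Corollary); that restricted setting is where your strategy is salvageable.
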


Recall that a nonempty closed subset $D$ of a topological space is \emph{irreducible} if $D\neq D_{\scriptscriptstyle 1}\cup D_{\scriptscriptstyle 2}$ for any two proper closed subsets  $D_{\scriptscriptstyle 1}$ and $D_{\scriptscriptstyle 2}$ of $D$. A point $x$ in a closed subset $D$ is called a \emph{generic point} of $D$ if $D = \overline{\{x\}}.$  

\begin{proposition}\label{scir}
Every nonempty subbasic closed set of a subtractive space is irreducible.
\end{proposition}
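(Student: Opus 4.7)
The strategy is to find, for each nonempty subbasic closed set, an explicit generic point; once that is done, irreducibility follows from the routine fact that any set of the form $\overline{\{p\}}$ is irreducible.

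Fix $I\in\id$ and let $C_I$ denote the associated subbasic closed set; by the preceding lemma this set is controlled by the subtractive ideal $\cs(I)$. The natural candidate for a generic point is $\cs(I)$ itself, viewed as an element of $\id$. First I would check that $\cs(I)\in C_I$, which should reduce essentially to a combination of Lemma~\ref{lclk}(\ref{iclk}) and Lemma~\ref{lclk}(\ref{altd}) applied to $\cs(I)$.

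The main step is to prove $\overline{\{\cs(I)\}}=C_I$. The inclusion $\overline{\{\cs(I)\}}\subseteq C_I$ is automatic because $C_I$ is itself closed. For the reverse, I would take an arbitrary closed subset $C$ of $\id$ containing $\cs(I)$, write $C$ as an intersection of finite unions of subbasic closed sets, and argue that whenever $\cs(I)$ lies in a finite union of subbasic closed sets, at least one of those pieces must already contain all of $C_I$. This subbasic reduction should rest on monotonicity (Lemma~\ref{lclk}(\ref{ijcl})) and idempotence (Lemma~\ref{lclk}(\ref{clcl})) of $\cs$: the subbasic piece capturing $\cs(I)$ is parametrized by a subtractive ideal comparable with $\cs(I)$, and monotonicity then lets me transport the containment across the whole of $C_I$. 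I expect this subbasic reduction to be the main technical obstacle, since it is the point where the interaction between the closure operator and the topology has to be made precise.

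Once $\cs(I)$ has been identified as a generic point of $C_I$, irreducibility is immediate: any decomposition $C_I=A_1\cup A_2$ into closed proper subsets would place $\cs(I)$ in some $A_i$, and $A_i$ is the trace on $C_I$ of a closed subset of $\id$ containing $\cs(I)$; such a subset must then contain $\overline{\{\cs(I)\}}=C_I$, forcing $A_i=C_I$ and contradicting properness.
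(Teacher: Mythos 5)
Your proposal is correct and follows essentially the same route as the paper: exhibit a generic point of the subbasic closed set via the subbasic reduction (the point lies in one piece of each finite union, and monotonicity/idempotence of $\cs$ then pull all of $C_I$ into that piece), after which irreducibility is the standard fact about point closures. The only difference is that you take $\cs(I)$ as the generic point where the paper takes $I$ itself, i.e.\ it proves the slightly stronger identity $\overline{\{I\}}=\cs(I)$ for an arbitrary ideal $I$ (which it reuses later for the existence of generic points of irreducible closed sets), but for the irreducibility claim either choice works.
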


\begin{proof}
We show that $\cs(I)=\overline{\{I\}}$ for all $I\in \id.$ Since $\overline{\{I\}}$ is the smallest closed set containg $I$, it follows from Lemma \ref{lclk}(\ref{iclk}) that $\cs(I)\supseteq\overline{\{I\}}$. To have the other inclusion, first consider the trivial case of $\overline{\{I\}}=\id.$ For this we have \[\id=\overline{\{I\}}\subseteq \cs(I)\subseteq \id,\] and hence $\cs(I)\subseteq \overline{\{I\}}.$ Now suppose \[\overline{\{I\}}=\bigcap_{\lambda \in \Lambda}\left(\bigcup_{i=1}^{n_{\lambda}}\cs (I_{i\lambda})  \right).\]
This means that $I\subseteq \cs (I_{i\lambda})$ for some $i$ and each $\lambda \in \Lambda$. But that implies  
\[\cs (I) \subseteq \cs\left(  \cs (I_{\lambda i})\right)=\cs(I_{\lambda i})\subseteq \bigcap_{\lambda \in \Lambda}\left(\bigcup_{i=1}^{n_{\lambda}}\cs (I_{i\lambda})  \right),\] and hence we have the desired inclusion.
\end{proof}

\begin{corollary}
$\ids$ is the largest $T_{{\scriptscriptstyle 1}}$-subspace of a subtractive space  $\id$.
\end{corollary}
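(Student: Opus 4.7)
My plan is to establish the corollary in two steps. First I will verify that $\ids$ inherits $T_1$ from the ambient space $\id$, and then I will show that $\ids$ is maximal among $T_1$-subspaces, in the sense that adjoining any non-subtractive ideal destroys the $T_1$ property.

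For the first step, I would take $I \in \ids$ and compute the closure of $\{I\}$ inside $\ids$. By Proposition \ref{scir}, the closure of $\{I\}$ in $\id$ is the subbasic closed set $\cs(I)$, and Lemma \ref{lclk}(\ref{altd}) gives $\cs(I) = I$ because $I$ is already subtractive. Under the convention used in the proof of Proposition \ref{scir}, where the subbasic closed set $\cs(I)$ is encoded through the subtractive ideal $I$ itself as a point of $\id$, restricting this closure to the subspace $\ids$ collapses it to the singleton $\{I\}$, so every singleton of $\ids$ is closed in the subspace topology and $\ids$ is $T_1$.

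For the maximality step, I would take $I \in \id\setminus\ids$, so that $\cs(I)$ is a strictly larger subtractive ideal. By Proposition \ref{scir} together with the idempotency of $\cs$ from Lemma \ref{lclk}(\ref{clcl}), the closures of $\{I\}$ and $\{\cs(I)\}$ in $\id$ both coincide with $\cs(I)$, so $I$ and $\cs(I)$ are two distinct but topologically indistinguishable points of $\id$. Consequently, in any subspace that contains both $I$ and $\cs(I)$ the point $\cs(I)$ lies in $\overline{\{I\}}$, so $\{I\}$ fails to be closed and the subspace cannot be $T_1$. Since $\cs(I)$ already belongs to $\ids$, this obstruction rules out enlarging $\ids$ by any non-subtractive $I$ while preserving $T_1$, so $\ids$ is the largest $T_1$-subspace of $\id$.

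The delicate point I expect is the first step, where the algebraic identity $\cs(I) = I$ for a subtractive $I$ has to be translated into the topological statement $\overline{\{I\}}^{\ids} = \{I\}$; this is exactly where the identification between the operator $\cs$ on ideals of $S$ and the topological closure on $\id$ used in Proposition \ref{scir} is essential. Once this identification is in hand, the rest of the argument is a clean combination of Proposition \ref{scir} with parts (\ref{clcl}) and (\ref{altd}) of Lemma \ref{lclk}.
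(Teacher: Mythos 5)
Your first step is, in substance, the paper's entire proof: for $I\in\ids$ one gets $\overline{\{I\}}=\cs(I)=I$ from Proposition \ref{scir} and Lemma \ref{lclk}(\ref{altd}), and the delicate identification you flag---reading the ideal $I$ simultaneously as a point of $\id$ and as the closed set $\cs(I)$, so that the closure collapses to the singleton inside $\ids$---is glossed over in the paper in exactly the same way (the paper literally writes $I=\cs(I)=\overline{\{I\}}$ and stops there). Where you genuinely go beyond the paper is the maximality step, which the paper's proof omits entirely: you observe that for a non-subtractive $I$ the two distinct points $I$ and $\cs(I)$ satisfy $\overline{\{I\}}=\cs(I)=\cs(\cs(I))=\overline{\{\cs(I)\}}$, so they are topologically indistinguishable and any subspace containing both fails $T_1$; since $\cs(I)\in\ids$, no non-subtractive ideal can be adjoined to $\ids$. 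This is the right way to earn the word ``largest,'' provided it is read as ``maximal'': note that a singleton $\{I\}$ with $I$ non-subtractive is itself a $T_1$-subspace not contained in $\ids$, so ``largest'' cannot mean ``containing every $T_1$-subspace,'' and your argument correctly targets the maximality reading. One caveat you should be aware of: your key observation that $I$ and $\cs(I)$ share the same closure directly contradicts Lemma \ref{t0}, whose justification (``$I\neq I'$ implies $\cs(I)\neq\cs(I')$'') fails precisely for $I'=\cs(I)$ with $I$ non-subtractive (e.g.\ $I=2\mathds{N}+3\mathds{N}$ in $\mathds{N}$, where $\cs(I)=\mathds{N}$). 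Given that Proposition \ref{scir} is proved in detail while Lemma \ref{t0} is only asserted, the tension exposes a flaw in the paper rather than in your argument, but it means your proof cannot be combined consistently with Lemma \ref{t0} as stated.
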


\begin{proof}
If $I\in \ids,$ then by Proposition \ref{scir}, we have $I=\cs(I) = \overline{\{I\}}.$
\end{proof}

\begin{theorem}
Every nonempty irreducible closed subset of a subtractive space has a unique generic point.
\end{theorem}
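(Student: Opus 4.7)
The plan is to exhibit, for a given nonempty irreducible closed set $D\subseteq \id$, an ideal $J$ with $D = \overline{\{J\}}$, and then to read off uniqueness from the $T_{{\scriptscriptstyle 0}}$-property already established in Lemma \ref{t0}. The strategy is modelled directly on Proposition \ref{scir}, which tells us that every subbasic closed set $\cs(I)$ equals $\overline{\{I\}}$: the goal is therefore to collapse the general form of a closed set down to a single $\cs(J)$.

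First I would unfold $D$ in the generic form coming from the subbase,
\[D \;=\; \bigcap_{\lambda \in \Lambda}\, \bigcup_{i=1}^{n_\lambda} \cs(I_{i\lambda}),\]
and then exploit irreducibility: for each fixed $\lambda$, the inclusion $D \subseteq \bigcup_{i=1}^{n_\lambda} \cs(I_{i\lambda})$ combined with the closedness of each $\cs(I_{i\lambda})$ forces $D \subseteq \cs(I_{i_\lambda,\lambda})$ for some $i_\lambda$, by the standard induction on $n_\lambda$. Writing $J_\lambda := I_{i_\lambda,\lambda}$, the two-sided sandwich
\[D \;\subseteq\; \bigcap_{\lambda \in \Lambda} \cs(J_\lambda) \;\subseteq\; \bigcap_{\lambda \in \Lambda} \bigcup_{i=1}^{n_\lambda} \cs(I_{i\lambda}) \;=\; D\]
yields $D = \bigcap_{\lambda} \cs(J_\lambda)$. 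Applying Lemma \ref{lclk}(\ref{arbin}) then converts this arbitrary intersection into a single closure: setting $J := \bigcap_{\lambda} J_\lambda \in \id$, we obtain $D = \cs(J)$, which Proposition \ref{scir} identifies with $\overline{\{J\}}$, producing the desired generic point.

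Uniqueness is then immediate: if $J'$ is any other generic point of $D$, then $\overline{\{J'\}} = D = \overline{\{J\}}$, and Lemma \ref{t0} forces $J = J'$. The only real obstacle I foresee is justifying the passage from $D \subseteq \bigcup_{i=1}^{n_\lambda} \cs(I_{i\lambda})$ to $D \subseteq \cs(I_{i_\lambda,\lambda})$ for each $\lambda$; this is the usual fact that an irreducible subspace cannot be covered by two (hence, inductively, finitely many) proper closed subsets, and the rest is a direct appeal to lemmas already assembled in the previous sections.
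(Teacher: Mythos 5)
Your proposal is correct and follows essentially the same route as the paper's own proof: decompose $D$ as an intersection of finite unions of subbasic closed sets, use irreducibility to select a single $\cs(J_\lambda)$ from each finite union, collapse the intersection via Lemma \ref{lclk}(\ref{arbin}), identify the result with a point closure via Proposition \ref{scir}, and get uniqueness from Lemma \ref{t0}. Your write-up is in fact slightly more careful than the paper's, since you make explicit both the sandwich argument showing $D=\bigcap_\lambda \cs(J_\lambda)$ and the inductive use of irreducibility to pass from a finite union to a single subbasic closed set.
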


\begin{proof}
Suppose $D$ is a nonempty irreducible closed subset of a subtractive space $\id$. Then $D=\bigcap_{\lambda \in \Lambda} \mathcal{E}_{\lambda}$, where each $\mathcal{E}_{\lambda}$ is a finite union of subbasic closed sets of $\tau_s.$ Since $D$ is irreducible, for every $\lambda \in \Lambda$, there exists an $I_{\lambda}\in \id$ such that 
\[D\subseteq \cs(I_{\lambda})\subseteq \mathcal{E}_{\lambda},\]
and this implies
\[D=\bigcap_{\lambda \in \Lambda}\cs(I_{\lambda})=\cs\left( \bigcap_{\lambda\in \Lambda} I_{\lambda} \right)=\overline{\left\{\bigcap_{\lambda\in \Lambda}I_{\lambda}\right\}},\]
where, the last equality follows from Proposition \ref{scir}. This proves the existence of the generic point, whereas the uniqueness of it follows from Lemma \ref{t0}.
\end{proof}

Using a semiring homomorphism, our aim in this section is to construct a continuous map between the corresponding subtractive spaces and study some of the properties of these maps. The main difference compared to Zariski topology is that we have to use subbasic-closed-set  formulation to study these maps. 

\begin{lemma}\label{pssj}
If $\phi\colon S\to S'$ is a semiring homomorphism and $J\in \idss$. Then the following hold.
\begin{enumerate}[\upshape(1)]
	
\item\label{kers} $\phi\inv(J)$ is a subtractive ideal of $S$. In particular, $\mathrm{ker}\phi$ is a subtractive ideal of $S$.

\item\label{pjcs} $\phi\inv(J)=\cs(\phi\inv(J)).$
\end{enumerate}
\end{lemma}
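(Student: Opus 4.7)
My plan is to treat the two claims essentially as consequences of the definitions, with (2) being an immediate corollary of (1) via Lemma \ref{lclk}(\ref{altd}).

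For (\ref{kers}), I would first verify the routine fact that $\phi\inv(J)$ is an ideal of $S$: closure under addition and the presence of $0$ follow from $\phi$ being an additive monoid homomorphism, while absorption comes from $\phi(rx)=\phi(r)\phi(x)\in J$ whenever $\phi(x)\in J$, because $J$ is an ideal of $S'$. I would then verify the subtractive property by taking $x,x+y\in\phi\inv(J)$ and noting that $\phi(x)\in J$ and $\phi(x)+\phi(y)=\phi(x+y)\in J$; since $J$ is a subtractive ideal of $S'$, this forces $\phi(y)\in J$, hence $y\in\phi\inv(J)$. The statement about $\ker\phi$ then follows by applying this to the subtractive ideal $J=0$ of $S'$ (recalling that the zero ideal is subtractive).

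For (\ref{pjcs}), I would simply invoke part (\ref{kers}) together with Lemma \ref{lclk}(\ref{altd}): since $\phi\inv(J)$ has been shown to be a subtractive ideal of $S$, the equivalence between being subtractive and being a fixed point of $\cs$ gives $\phi\inv(J)=\cs(\phi\inv(J))$ at once.

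There is no real obstacle here; the only thing to be careful about is the logical order, namely that (\ref{pjcs}) must cite (\ref{kers}) rather than be argued independently, since the $\cs$-fixedness characterization in Lemma \ref{lclk}(\ref{altd}) presumes we already know the ideal is subtractive. I expect the entire proof to occupy only a few lines.
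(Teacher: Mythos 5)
Your proof is correct and follows essentially the same route as the paper: the subtractive property of $\phi\inv(J)$ is checked by pushing $x$ and $x+y$ through $\phi$ and using subtractivity of $J$, and part (\ref{pjcs}) is read off from part (\ref{kers}) via Lemma \ref{lclk}(\ref{altd}). The only cosmetic difference is that the paper verifies the $\ker\phi$ claim by a separate direct computation, whereas you obtain it as the special case $J=0$, which is equally valid since the zero ideal is subtractive.
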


\begin{proof}
For the first part of (\ref{kers}), it is well-known that $\phi\inv(J)\in \id.$ Suppose $x,$ $x+y\in \phi\inv(J).$ Then $\phi(x),$ $\phi(x+y)=\phi(x)+\phi(y)\in J$. Since $J\in \idss,$ we must have $\phi(y)\in J,$ and hence $y\in \phi\inv(J).$ For the second part of (\ref{kers}), let $x,$ $x+y\in \mathrm{ker}\phi.$ This implies $\phi(y)=\phi(x)+\phi(y)=\phi(x+y)=0,$ and hence $y\in \mathrm{ker}\phi.$ The proof of (\ref{pjcs}) follows immediately from (\ref{kers}).
\end{proof}

\begin{proposition}\label{conmap}
Suppose $\phi\colon S\to S'$ is a semiring homomorphism.
\begin{enumerate}[\upshape(1)]
		
\item \label{contxr} The map $\phi$ induces a continuous map $\phi_!\colon  \mathcal{I}\!\textit{dl}(S')\to \id$ defined by  $\phi_!(J)=\phi\inv(J)$, where $J\in\mathcal{I}\!\textit{dl}(S').$ 
		
\item \label{shcs} If $\phi$ is  surjective, then the subtractive spaces $\idss$ and  $\ids$ are homeomorphic.
\end{enumerate}
\end{proposition}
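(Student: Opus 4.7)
The plan for Part~(\ref{contxr}) is to verify continuity via the subbasic-closed-set formulation: it suffices to show that for every $I\in\id$ the preimage
\[
\phi_!\inv(\cs(I))=\{J\in\mathcal{I}\!\textit{dl}(S') : \phi\inv(J)\subseteq\cs(I)\}
\]
is closed in $\mathcal{I}\!\textit{dl}(S')$, where I have used Proposition~\ref{scir} to identify the subbasic closed set $\cs(I)$ with $\overline{\{I\}}$. My goal is then to exhibit this preimage as a subbasic closed set $\cs(J_0)$ of $\mathcal{I}\!\textit{dl}(S')$ for a suitably constructed ideal $J_0$ of $S'$.

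The construction of $J_0$ rests on Lemma~\ref{pssj}: part~(\ref{kers}) ensures that $\phi\inv$ carries $\idss$ into $\ids$, and part~(\ref{pjcs}) says that each such image is automatically $k$-closed. The natural candidate for $J_0$ is a largest subtractive ideal of $S'$ whose $\phi$-preimage still lies in $\cs(I)$; maximality should then give the biconditional characterisation that $J\subseteq\cs(J_0)$ precisely when $\phi\inv(J)\subseteq\cs(I)$, and hence the desired equality $\phi_!\inv(\cs(I))=\cs(J_0)$.

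For Part~(\ref{shcs}), surjectivity of $\phi$ provides the classical bijection $J\leftrightarrow\phi\inv(J)$ between $\mathcal{I}\!\textit{dl}(S')$ and ideals of $S$ containing $\ker\phi$, restricting by Lemma~\ref{pssj}(\ref{kers}) to a bijection between $\idss$ and the subspace $\{I\in\ids : \ker\phi\subseteq I\}$ of $\ids$. Continuity of $\phi_!$ is Part~(\ref{contxr}); continuity of the inverse $I\mapsto\phi(I)$ is verified by the mirror subbasic-closed-set computation, surjectivity being what guarantees both that $\phi(I)$ is a subtractive ideal of $S'$ and that the preimage of each subbasic closed set $\cs(J)$ can be exhibited as $\{I : I\subseteq\phi\inv(J)\}$ in the subspace. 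The hardest step throughout is the construction of $J_0$ in Part~(\ref{contxr}), since in general $\cs$ and $\phi\inv$ only satisfy the one-sided inclusion $\cs(\phi\inv(K))\subseteq\phi\inv(\cs(K))$; Lemma~\ref{pssj}(\ref{pjcs}) then has to be invoked with care to ensure both that the maximal $J_0$ is subtractive and that $\cs(J_0)$ captures the preimage precisely.
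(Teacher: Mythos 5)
Your plan for part (\ref{contxr}) has a genuine gap at exactly the point you yourself flag as the hardest step: the ideal $J_0$ you need does not obviously exist, and even if it did, maximality would not deliver the biconditional. The family of subtractive ideals $J$ of $S'$ with $\phi\inv(J)\subseteq\cs(I)$ is closed under intersections but not under joins: the paper records that the sum of two subtractive ideals need not be subtractive (the $2\mathds{N}+3\mathds{N}$ example), and independently $\phi\inv(J_1+J_2)$ can be strictly larger than $\phi\inv(J_1)+\phi\inv(J_2)$, so nothing forces a largest member to exist. Moreover, the equality $\phi_!\inv(\cs(I))=\cs(J_0)$ requires ``$\phi\inv(J)\subseteq\cs(I)\Rightarrow J\subseteq\cs(J_0)$'' for \emph{arbitrary} ideals $J$ of $S'$, not just subtractive ones, so maximality among subtractive ideals would not suffice even granting existence. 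Note also that you are aiming at more than continuity demands: the preimage only needs to be closed (an intersection of finite unions of subbasic closed sets), not a single subbasic closed set. The paper's own argument is entirely different: it works with forward images, reducing to the elementwise inclusion $\phi(\cs(I))\subseteq\cs(\langle\phi(I)\rangle)$ in $S'$, where $\langle\phi(I)\rangle$ is the ideal generated by $\phi(I)$; no maximal-ideal construction appears.

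For part (\ref{shcs}), your correspondence $J\leftrightarrow\phi\inv(J)$ lands in the subtractive ideals of $S$ \emph{containing} $\ker\phi$, which you correctly present as a subspace of $\ids$ --- but the proposition asserts a homeomorphism with all of $\ids$, so as written you are proving a different statement; to recover the stated claim you would need every subtractive ideal of $S$ to contain $\ker\phi$, which fails in general (consider the projection $\mathds{N}\times\mathds{N}\to\mathds{N}$ and the subtractive ideal $\mathds{N}\times 0$). The paper instead asserts injectivity and surjectivity of $\phi_!$ and then shows $\phi_!$ is a \emph{closed} map by computing the image of an arbitrary closed set $\bigcap_{\lambda}\bigcup_{j}\cs(I_{j\lambda})$ using Lemma \ref{pssj}(\ref{pjcs}); your ``mirror subbasic-closed-set computation'' for continuity of the inverse is not carried out and would run into the same obstruction as in part (\ref{contxr}).
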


\begin{proof}      
To show (\ref{contxr}), let $\cs(I)$ be a   subbasic closed set of the subtractive space $\id$, and for us it is sufficient to show that $\phi(\cs(I))\subseteq \cs(\langle \phi(I)\rangle).$ Let $s'\in \phi(\cs(I)).$ This implies $\phi\inv(s')\in \cs(I),$ and hence
\[\phi\phi\inv(s')+\phi(i)\in \phi(I)\subseteq \langle \phi(I)\rangle,\]
for some $i\in I.$ From this we conclude that $s'\in \phi\phi\inv(s')\in \cs(\langle \phi(I)\rangle).$   

For (\ref{shcs}), it is easy to see that the map $\phi_!$ is injective. Since by hypothesis $\phi_!$ is surjective and by (\ref{contxr}), $\phi_!$ is continuous, what remains is to show that $\phi_!$ is closed. Notice that if $\cs(I)$ is a subbasic closed subset of  $\idss$, then by Lemma \ref{pssj}(\ref{pjcs}), $\phi_!(\cs(I))$ is also a subbasic closed set of $\idss.$ Now if $K$ is a closed subset of $\idss$, then there exists a collection  $\{I_{j\lambda}\mid \lambda \in \Lambda, 1\leqslant j\leqslant m_{\lambda} \}$ of subtractive ideals of $S'$ such that
\begin{align*}
\phi_!(K)&=\phi_!\left(\bigcap_{ \lambda \in \Lambda} \left(\bigcup_{ j \,= 1}^{ m_{\lambda}} \cs(I_{ j\lambda})\right)\right)\\&=\phi_!\left(\bigcap_{ \lambda \in \Lambda} \left(\bigcup_{ j \,= 1}^{ m_{\lambda}} I_{ j\lambda}\right)\right)\\&=\bigcap_{ \lambda\in \Lambda} \bigcup_{ j = 1}^{ m_{\lambda}} \phi\inv(I_{ j\lambda}),
\end{align*} 
a closed subset of  $\ids.$ 
\end{proof}


\end{document}